\newtheorem{theorem}{Theorem}
\newtheorem{prop}[theorem]{Proposition}
\newtheorem{lem}[theorem]{Lemma}
\newtheorem{cor}[theorem]{Corollary}
\theoremstyle{definition}
\newtheorem{rem}[theorem]{Remark}
\newtheorem{mydef}[theorem]{Definition}
\newtheorem{example}[theorem]{Example}
\renewcommand{\epsilon}{\varepsilon}
\def\lbra{{[}\!{[}}
\def\rbra{{]}\!{]}}
\def\R{\mathbb{R}}
\def\T{\mathbb{T}}
\def\x{\mathbf{x}}
\def\y{\mathbf{y}}
\def\z{\mathbf{z}}
\def\<{\langle}
\def\>{\rangle}
\begin{document}
\title{A note on the shifted Courant-Nijenhuis torsion}
\author{Marco Aldi, Sergio Da Silva, and Daniele Grandini}
\begin{abstract} We characterize the vanishing of the shifted Courant-Nijenhuis torsion as the strongest tensorial integrability condition that can be imposed on a skew-symmetric endomorphism of the generalized tangent bundle.
\end{abstract}
\providecommand{\keywords}[1]
{
  \small	
  \textbf{\textit{Keywords---}} #1
}

\keywords{Generalized geometry, Courant algebroids, polynomial structures}
\subjclass[2020]{53D18, 14P99}
\maketitle

\section{Introduction}

Let $M$ be a (paracompact) smooth manifold and let $\T M$ be its generalized tangent bundle (i.e.\ the direct sum of its tangent and cotangent bundles). A generalized almost complex structure on $M$ is a skew-symmetric (with respect to the tautological inner product on $\T M$) bundle endomorphism $J:\T M\to \T M$ such that $J^2+{\rm id}=0$. Generalized almost complex structures that satisfy the integrability condition 
\begin{equation}\label{eq:1}
0 = \mathcal T_J(\x,\y)= \lbra J(\x),J(\y)\rbra + J^2 (\lbra \x,\y\rbra ) - J(\lbra J(\x),\y\rbra+\lbra \x,J(\y)\rbra)
\end{equation}
for all smooth sections $\x,\y$ of $\T M$, where $\lbra\cdot,\cdot\rbra$ denotes the Courant-Dorfman bracket, are known as generalized complex structures and play a special role in Generalized Geometry \cite{Gualtieri11}. The map $\mathcal T_J$, known as the Courant-Nijenhus torsion of $J$, has the remarkable property of being tensorial in the sense that $\mathcal T_J(f\x,\y)=f\mathcal T_J(\x,\y)=\mathcal T_J(\x,f\y)$ whenever $f$ is a smooth function on $M$ and $\x,\y$ are smooth sections of $\T M$. As a result, it suffices to check the integrability condition \eqref{eq:1} on local frames. Moreover, since the Courant-Dorfman bracket is the derived bracket of the de Rham operator, any non-tensorial compatibility condition with an endomorphism (which is by definition $C^{\infty}(M)$-linear) of $\T M$ would necessarily impose non-trivial differential equations to be satisfied by all smooth functions on $M$.

Motivated by earlier analysis of the cubic case \cites{Vaisman08, PoonWade11} and classical work on higher degree structures on the tangent bundle \cites{GoldbergYano70, GoldbergPetridis73}, the notion of a generalized polynomial structure was introduced in \cite{AldiGrandini22}. A generalized polynomial structure extends the notion of a generalized almost complex structure by relaxing the $J^2+{\rm id}=0$ condition to $P(J)=0$ with $P$ an arbitrary polynomial in $\mathbb R[x]$. Many features of generalized almost complex structures (such as the decomposition of the complexification of $\T M$ into subspaces labeled by roots of $P$) carry through to arbitrary generalized polynomial structures. However the integrability condition \eqref{eq:1} does not straightforwardly apply to arbitrary polynomial structures. One explanation for this is that the Courant-Nijenhuis torsion is {\it not} tensorial for arbitrary generalized polynomial structures. On the other hand, remarkably, the so-called shifted Courant-Nijenhuis torsion 
\begin{equation}\label{eq:2}
    \mathcal S_J(\x,\y)=\mathcal T_J(J(\x),\y)+\mathcal T_J(\x,J(\y))
\end{equation}
is tensorial for {\it any} skew-symmetric endomorphism $J$ of $\T M$. Moreover, the integrability of almost complex structures is equivalent to the vanishing of their shifted Courant-Nijenhuis. In fact, as argued in \cite{AldiGrandini22}, the vanishing of the shifted Courant-Nijenhuis torsion is a sensible integrability condition for generic generalized polynomial structures (extra care must be taken when the polynomial $P$ has repeated roots).  

It is natural to ask whether the shifted Courant-Nijenhuis torsion is special in this regard and whether there are analogous tensorial expressions whose vanishing could give rise to exotic integrability conditions. In this paper we show that the shifted Courant-Nijenhuis torsion is the strongest possible tensorial condition expressed (in a precise way which will be specified below) in terms of the Courant-Dorfman bracket that can be uniformly imposed on all skew-symmetric endomorphisms of the generalized tangent bundle. Our main observation is that, using a construction of Kosmann-Schwarzbach \cite{Kosmann-Schwarzbach19}, ``Nijenhuis-like'' expressions can be represented by real polynomials in three variables and polynomials representing tensorial expressions form an ideal which, as we prove using methods of real algebraic geometry, is generated by the polynomial representing the shifted Courant-Nijenhuis torsion. This is the content of Theorem \ref{theorem: main}.

It seems plausible that our techniques yield results in a much more general context. We conclude this paper by listing some possible directions for further research such as symmetric endomorphisms (modeled after generalized Riemannian structures), and tensorial objects on more general algebroids. Since our primary goal is to provide additional context for our main result, we limit ourselves to a sketch of these open problems, leaving a systematic treatment for future work.

\section{Preliminaries}

In this section we will review the geometry of the generalized tangent bundle (referring the reader to \cite{Gualtieri11} for further details)  and introduce the basic vocabulary needed to state our main theorem. 

The generalized tangent bundle
$\T M= TM \oplus T^*M$ of a manifold $M$ has the following canonical structures:

\begin{enumerate}[i)]
\item the {\it anchor map}, defined as the projection map $\pi:\T M\rightarrow TM$; 
\item the {\it tautological inner product} $\langle \cdot,\cdot \rangle$, which is the nondegenerate, symmetric $C^\infty(M)$-bilinear map defined by
\begin{equation*}
    \langle X+\alpha, Y+\beta\rangle=\frac{1}{2}\left(\alpha(Y)+\beta(X)\right)
\end{equation*}
for all sections $X,Y\in \Gamma(TM)$ and $\alpha,\beta\in \Omega_M^1$;
\item the $\R$-bilinear map $\lbra\cdot ,\cdot \rbra:\Gamma(\T M)\otimes_{\R}\Gamma(\T M)\rightarrow \Gamma(\T M)$ such that
\begin{equation*}
\lbra X+\alpha, Y+\beta\rbra=[X,Y]+{\mathcal L}_X\beta-i_Yd\alpha,
\end{equation*}
for all $X,Y\in \Gamma(TM)$ and $\alpha,\beta\in \Omega_M^1$, called the {\it Courant-Dorfman bracket} on $M$.
\end{enumerate} 

\noindent Here $\mathcal{L}_X$ is the Lie derivative along the vector field $X$, and $i_Y$ is the interior derivative along $Y$. The data $(\T M,\pi, \langle\cdot ,\cdot \rangle, \lbra\cdot ,\cdot \rbra)$ defines a {\it Courant Algebroid} \cite{Gualtieri11}. In particular, the following property is satisfied:
\begin{equation}\label{eq:Courant}\pi(\x)\langle\y,\z\rangle=\langle\lbra\x,\y\rbra,\z\rangle+\langle\lbra\x,\z\rbra,\y\rangle=\langle\lbra\y,\z\rbra,\x\rangle+\langle\lbra\z,\y\rbra,\x\rangle\, ,
\end{equation} 

\noindent for all $\x,\y,\z\in \Gamma(\T M)$, where $\pi(\x)\langle\y,\z\rangle$ denotes the usual action of the vector field $\pi(\x)$ on the smooth function $\langle\y,\z\rangle$. It well-known that \eqref{eq:Courant} implies the Leibniz identities
\begin{equation}\label{eq:Leibniz}
\lbra\x,f\y\rbra=f\lbra\x,\y\rbra+\pi(\x)(f)\y, \qquad
\lbra f\x,\y\rbra=f\lbra\x,\y\rbra-\pi(\y)(f)\x+2\langle\x,\y\rangle df
\end{equation}
for all $\x,\y\in \Gamma(\T M)$ and $f\in C^\infty(M)$.
\begin{mydef}  Let $\Theta$ be the $C^\infty(M)$-module of $\R$-trilinear forms
\begin{equation}
\tau:\Gamma(\T M)\otimes_{\mathbb R} \Gamma(\T M)\otimes_{\mathbb R}\Gamma(\T M)\rightarrow C^\infty(M)\,.
\end{equation}
Distinguished, among all element of $\Theta$ is the {\it Courant element}  $\tau_C$ such that \begin{equation}
\tau_C(\x,\y,\z)=\langle\lbra\x,\y\rbra,\z \rangle
\end{equation}
for all $\x,\y,\z\in \Gamma(\T M)$ \cite{Courant90}.
Moreover, any vector bundle morphism $J:\T M\rightarrow \T M$ induces an action $\bullet_J$ of the polynomial ring $\R[x,y,z]$ on $\Theta$, uniquely defined by
\begin{align*}
x\bullet_J \tau (\x,\y,\z) & = \tau(J(\x),\y,\z),\\
y\bullet_J \tau (\x,\y,\z) & = \tau(\x,J(\y),\z),\\
z\bullet_J \tau (\x,\y,\z) & = \tau(\x,\y,J(\z)).
\end{align*}
\end{mydef}
\begin{rem}
The polynomial action $\bullet_J$ was first introduced in \cite{Kosmann-Schwarzbach19} (see also \cite{AldiGrandini22} for applications to polynomial structures). 
\end{rem}

\begin{example}\label{ex:3}
Let $J:\T M\rightarrow \T M$ be a skew-symmetric endomorphism. Then
\begin{equation}
(x+z)(y+z)\bullet_J\tau_C=\langle{\mathcal T}_J(\x,\y), \z\rangle
\end{equation}
where $\mathcal T_J$ is the {\it Courant-Nijenhuis torsion} of $J$ defined in Equation \eqref{eq:1}. A direct calculation shows that if $J$ is a generalized almost complex structure, then the $\mathbb R$-trilinear form $\langle{\mathcal T}_J(\x,\y), \z\rangle$ is in fact a tensor (i.e.\ $C^\infty(M)$-trilinear).
\end{example}

\begin{mydef} A {\it tensorial pair} is given by $(P, J)$ where $P=P(x,y,z)\in\R[x,y,z]$, and $J$ is a skew endomorphism  such that $P\bullet_J\tau_C$ is $C^\infty(M)$-trilinear. Moreover, a polynomial $P$ is called {\it tensorial} if $(P, J)$ is a tensorial pair for all skew endomorphisms $J:\T M\rightarrow \T M$ and any smooth manifold $M$.
\end{mydef}

\begin{example}
Example \ref{ex:3} can be now reframed as the statement that $((x+z)(y+z),J)$ is a tensorial pair whenever $J$ is an almost complex structure. Let us show that the Courant-Nijenhuis torsion is not in general tensorial by exhibiting a skew-symmetric endomorphism $J$ of $\T M$ such that the pair $((x+z)(y+z), J)$ is not a tensorial pair. Let $M$ be the three-dimensional Heisenberg Nilmanifold i.e.\ the quotient of the group of 3-by-3 upper-triangular real matrices with 1s along the diagonal by the subgroup of matrices with integer entries. Let us fix a global frame $X_1,X_2,X_3$ (so that $[X_i, X_j]=\epsilon_{ij}^kX_k$) and let us denote its corresponding coframe by $\alpha_1,\alpha_2,\alpha_3$. Consider the skew-symmetric endomorphism $J$ of $\T M$ such that $JX_1=X_2$, $JX_2=X_3+\alpha_3$, $JX_3=-\alpha_2$, $J\alpha_1=0$, $J\alpha_2=-\alpha_1$, and $J\alpha_3=-\alpha_2$.
Then, setting $\x=X_3+\alpha_3$ and $\y=X_3-\alpha_3$ and letting $f$ be an arbitrary smooth function on $M$ we obtain
\begin{equation}\label{eq:8}
{\mathcal T}_J(f\x,\y)-f\mathcal T_J(\x,\y) =-2J(\lbra -f\alpha_2, X_3\rbra)-2fJ(\lbra\alpha_2,X_3\rbra) =2X_3(f)\alpha_1\,,
\end{equation}
which vanishes only under the additional requirement $X_3(f)=0$. Hence the {\it Courant-Nijenhuis polynomial} $(x+z)(y+z)$ is not tensorial.

\end{example}

It follows from Equation \eqref{eq:8} that imposing the non-tensorial condition $\mathcal T_J(\x,\y)=0$ on all smooth sections $\x,\y$ of $\T M$ results in a differential equation $X_3(f)=0$ imposed on {\it all} smooth functions $f$ on $M$. This is a general feature of non-tensorial expressions written in terms of the Courant-Dorfman bracket, which is the derived bracket of the de Rham operator \cite{Kosmann-Schwarzbach04}. 

As another example, consider the non-tensorial condition $(x+y)\bullet_J\tau_C=0$ for some (necessarily $C^\infty(M)$-linear) vector bundle endomorphism $J$ of $\T M$. Suppose $\x,\y$ are smooth sections of $\T M$ satisfying this condition (i.e.\ $\lbra J(\x),\y\rbra+\lbra \x,J(\y)\rbra=0$) and let $f\in C^\infty(M)$. Then $(x+y)\bullet_J\tau_C=0$ implies 
\begin{equation}
0=\lbra J(\x),f\y\rbra+\lbra \x,J(f\y)\rbra = \pi(\x)(f)J(\y)+\pi(J(\x))(f)\y\, ,
\end{equation}
which is a non-trivial system of differential equations imposed on all smooth functions of $M$. We conclude that any sensible compatibility condition between the Courant-Dorfman bracket and a bundle endomorphism $J$ of $\T M$ is necessarily tensorial.

\begin{example}\label{ex:minimality}
Let $J:\T M\to \T M$ be a generalized polynomial structure such that $P(J)=0$ for some polynomial $P\in \R[t]$. An integrability condition (called {\it minimality} in \cite{AldiGrandini22}) for generalized polynomial structures can be stated as the vanishing of the trilinear form
\begin{equation}\label{eq:7}
    \mathcal C_J(\x,\y,\z)=\sum_{i=0}^N \sum_{i_1+i_2+i_3=i} a_i (-1)^i\binom{i}{i_1,i_2,i_3} \tau_C(J^{i_1}(\x),J^{i_2}(\y),J^{i_3}(\z))
\end{equation}
for all $\x,\y,\z\in \T M$, where $P(t)=a_0+a_1t+\cdots + a_Nt^N$. In the case in which $J$ is a generalized almost complex structure, \eqref{eq:7} is equivalent to \eqref{eq:1}. As it turns out, the $\R$-trilinear form $\mathcal{C}_J$ is actually $C^\infty(M)$-trilinear and equal to $Q\bullet_J\tau_C$
where $Q(x,y,z)=P(x+y+z)$. Hence $(Q,J)$ is a tensorial pair. 
\end{example}

\begin{prop}\label{prop:tens} Let $P\in \R[x,y,z]$, and let $J:\T M\rightarrow \T M$ be skew. The pair $(P,J)$ is tensorial 
if and only if the following conditions are satisfied:
\begin{align}
\sum_{i,j,t}(-1)^ja_{i,j,t-j}\pi\left(J^i(\x)\right)(f)J^{t}& =0\label{eq:9}\\ \sum_{i,j,t}\left((-1)^{j}a_{j,i,t-j}\pi\left(J^i(\y)\right)(f)J^{t}(\z)\right) & =  \sum_{i,j,t}\left((-1)^j    a_{j,t-j,i}\pi\left(J^i(\z)\right)(f)J^{t}(\y)\right)\label{eq:10}
\end{align}
for all $\x,\y,\z\in \Gamma(\T M)$ and $f\in C^\infty(M)$, where $P(x,y,z)=\sum_{i,j,k}a_{i,j,k}x^iy^jz^k$.
\end{prop}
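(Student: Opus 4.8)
The plan is to reduce tensoriality of $(P,J)$ to the vanishing, one argument at a time, of the failure of $C^\infty(M)$-homogeneity of $P\bullet_J\tau_C(\x,\y,\z)=\sum_{i,j,k}a_{i,j,k}\langle\lbra J^i(\x),J^j(\y)\rbra,J^k(\z)\rangle$, and to read off \eqref{eq:9} and \eqref{eq:10} by expanding each such failure with the Leibniz identities \eqref{eq:Leibniz}. Since this expression is $\R$-trilinear by construction, $(P,J)$ is tensorial exactly when $P\bullet_J\tau_C$ is separately $C^\infty(M)$-linear in $\x$, in $\y$, and in $\z$.

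The $\z$-slot is automatic: as $J$ is $C^\infty(M)$-linear and $\langle\cdot,\cdot\rangle$ is $C^\infty(M)$-bilinear, substituting $f\z$ for $\z$ only factors out $f$, with no bracket contribution. For the $\y$-slot, substitute $f\y$ and apply $\lbra J^i(\x),fJ^j(\y)\rbra=f\lbra J^i(\x),J^j(\y)\rbra+\pi(J^i(\x))(f)J^j(\y)$; the defect is $\sum_{i,j,k}a_{i,j,k}\pi(J^i(\x))(f)\langle J^j(\y),J^k(\z)\rangle$, and using $\langle J^j(\y),J^k(\z)\rangle=(-1)^j\langle\y,J^{j+k}(\z)\rangle$ (skew-symmetry of $J$) together with the substitution $t=j+k$ rewrites it as $\big\langle\y,\sum_{i,j,t}(-1)^j a_{i,j,t-j}\pi(J^i(\x))(f)J^{t}(\z)\big\rangle$. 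Because $\y$ and $\z$ are arbitrary sections and $\langle\cdot,\cdot\rangle$ is pointwise nondegenerate, the vanishing of this defect for all $\x,\y,\z,f$ is equivalent to the operator identity \eqref{eq:9}. For the $\x$-slot, substitute $f\x$ and use the other Leibniz identity $\lbra fJ^i(\x),J^j(\y)\rbra=f\lbra J^i(\x),J^j(\y)\rbra-\pi(J^j(\y))(f)J^i(\x)+2\langle J^i(\x),J^j(\y)\rangle df$; now the defect has two terms, which after applying $\langle df,J^k(\z)\rangle=\tfrac12\pi(J^k(\z))(f)$ and moving the powers of $J$ off the $\x$-entry of each pairing (again by skew-symmetry) becomes $\langle\x,\,U\rangle$, where $U$ is the difference of the two sides of \eqref{eq:10} after the substitutions $t=i+k$ in the first sum and $t=i+j$ in the second. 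Nondegeneracy then identifies its vanishing with \eqref{eq:10}, and running the three computations backwards gives the converse.

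The one genuinely delicate point is the bookkeeping: after the repeated uses of skew-symmetry of $J$ one must track carefully which of $\x,\y,\z$ carries which exponent and re-index the multi-indices of $a_{i,j,k}$ so that the two defects land on exactly the forms \eqref{eq:9} and \eqref{eq:10}. Everything else is a mechanical application of \eqref{eq:Leibniz}, the $C^\infty(M)$-linearity of $J$, and nondegeneracy of the tautological pairing; no algebraic geometry and no hypothesis on $P$ enter at this stage.
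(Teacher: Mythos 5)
Your proposal is correct and follows essentially the same route as the paper's proof: impose $C^\infty(M)$-linearity slot by slot (the $\z$-slot being automatic), expand the defects for the $\y$- and $\x$-slots via the Leibniz identities \eqref{eq:Leibniz}, and then use skew-symmetry of $J$ together with nondegeneracy of the tautological pairing to convert the two defects into \eqref{eq:9} and \eqref{eq:10}. Your re-indexing ($t=j+k$, resp.\ $t=i+k$ and $t=i+j$) matches the paper's intermediate equations, so there is no gap.
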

\begin{proof} $P\bullet_J\tau_C$ is a tensor if and only if
\begin{equation}\label{eq:13}
\sum_{i,j,k}a_{i,j,k}\left(\langle\lbra J^i(\x),J^j(f\y)\rbra, J^k(\z)\rangle-f\langle\lbra J^i(\x),J^j(\y)\rbra, J^k(\z)\rangle\right)=0\,,
\end{equation}
and
\begin{equation}\label{eq:14}
\sum_{i,j,k}a_{i,j,k}\left(\langle\lbra J^i(f\x),J^j(\y)\rbra, J^k(\z)\rangle-f\langle\lbra J^i(\x),J^j(\y)\rbra, J^k(\z)\rangle\right)=0
\end{equation}
hold for all $\x,\y,\z\in \Gamma(\T M)$ and $f\in C^\infty(M)$.
Using \eqref{eq:Leibniz}, Equations \eqref{eq:13} and \eqref{eq:14} are, respectively, equivalent to
\begin{equation}\label{eq:15}
\sum_{i,j,k}a_{i,j,k}\pi(J^i(\x))(f)\langle J^j(\y), J^k(\z)\rangle=0
\end{equation}
and
\begin{equation}\label{eq:16}
\sum_{i,j,k}a_{i,j,k}\left(-\pi(J^j(\y))(f)\langle J^i(\x),J^k(\z)\rangle+\pi(J^k(\z))(f)\langle J^i(\x),J^j(\y)\rangle\right)=0\,.
\end{equation}
Using the skew-symmetry of $J$ and the nondegeneracy of the tautological inner product shows that \eqref{eq:15} is equivalent to \eqref{eq:9}. Similarly, \eqref{eq:16} is equivalent to \eqref{eq:10}. 
\end{proof}
\begin{theorem}\label{theo:unitens} Let $P=\sum_{i,j,k}a_{i,j,k}x^iy^jz^k \in \R[x,y,z]$. The following conditions are equivalent:
\begin{enumerate}[(i)]
    \item  $P$ is tensorial;
    \item  the following system of equations is satisfied:
\begin{equation*}\sum_{j}(-1)^ja_{i,j,t-j}=0,\quad \sum_{j}(-1)^ja_{j,t-j,i}=0, \quad \sum_{j}(-1)^ja_{t-j,i,j}=0\end{equation*}
for all non-negative integers $i,t$;
\item The polynomial $P$ vanishes on the following subvariety of $\R^{3}$:
\begin{equation*}
V=\left\{(x,y,z):y+z=0\right\}\cup \left\{(x,y,z):z+x=0\right\}\cup\left\{(x,y,z):x+y=0\right\}\,.
\end{equation*}
\end{enumerate}
\end{theorem}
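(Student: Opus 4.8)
The plan is to establish $(i)\Leftrightarrow(ii)$ through Proposition~\ref{prop:tens}, feeding it well-chosen test data, and $(ii)\Leftrightarrow(iii)$ by expanding $P$ along the three coordinate hyperplanes.

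For $(ii)\Rightarrow(i)$ I would use the skew-symmetry of $J$ to rewrite $\langle J^j(\y),J^k(\z)\rangle=(-1)^j\langle\y,J^{j+k}(\z)\rangle$ (and similarly for the other slots) together with the nondegeneracy of the tautological inner product; this turns \eqref{eq:9} and \eqref{eq:10} into linear relations among the terms $\pi\bigl(J^i(\cdot)\bigr)(f)\,J^{t}(\cdot)$ whose coefficients are exactly the sums $\sum_j(-1)^ja_{i,j,t-j}$, $\sum_j(-1)^ja_{j,t-j,i}$, $\sum_j(-1)^ja_{t-j,i,j}$, up to an overall sign and the reindexing $t\mapsto t-j$. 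The hypotheses in $(ii)$ make all of these vanish, so \eqref{eq:9} and \eqref{eq:10} hold for every skew $J$ and every $M$; that is, $P$ is tensorial.

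The core of the argument is $(i)\Rightarrow(ii)$, where I would produce explicit obstructions. Fix $N>2\deg P$, let $M=\R^N$ with coordinates $x_1,\dots,x_N$, let $A\in\mathrm{End}(TM)$ be the constant nilpotent shift $A\partial_k=\partial_{k+1}$ for $k<N$ and $A\partial_N=0$, and set $J(X+\alpha)=AX-A^*\alpha$, where $A^*$ is the fiberwise transpose; a two-line check shows $J$ is skew-symmetric, so $(P,J)$ is a tensorial pair and \eqref{eq:9}, \eqref{eq:10} apply. Taking $\x=\z=\partial_1$ and $f=\sum_k\lambda_kx_k$ an arbitrary linear function gives $\pi\bigl(J^i(\x)\bigr)(f)=\lambda_{i+1}$ and $J^{t}(\z)=\partial_{t+1}$; since $\partial_1,\dots,\partial_N$ are everywhere linearly independent and the $\lambda_k$ are arbitrary, \eqref{eq:9} forces $\sum_j(-1)^ja_{i,j,t-j}=0$ for all $i,t$, which is the first family in $(ii)$. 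For the other two families, run the same construction on $M=\R^{2N}$ with $A$ shifting each of two disjoint blocks of coordinates, placing $\z$ in the first block and $\y$ in the second: in \eqref{eq:10} the two sides are then supported on disjoint sets of coordinate vector fields, hence vanish separately, and the same independence/arbitrariness argument yields $\sum_j(-1)^ja_{j,t-j,i}=0$ and $\sum_j(-1)^ja_{t-j,i,j}=0$.

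For $(ii)\Leftrightarrow(iii)$: a polynomial in two real variables vanishes as a function iff all its coefficients vanish, so $P$ vanishes on $V$ iff it vanishes on each of $\{y+z=0\}$, $\{z+x=0\}$, $\{x+y=0\}$, i.e.\ iff $P(x,y,-y)$, $P(-z,y,z)$ and $P(x,-x,z)$ all vanish identically; expanding, the coefficient of $x^iy^t$ in $P(x,y,-y)$ equals $(-1)^t\sum_j(-1)^ja_{i,j,t-j}$, and the other two restrictions produce the remaining two families of $(ii)$ after the analogous bookkeeping. The main obstacle is the construction in $(i)\Rightarrow(ii)$: one must choose a skew $J$ whose iterated images $J^{t}(\z)$ are linearly independent (so the coefficients decouple) while the directional derivatives $\pi(J^i(\x))(f)$ stay freely prescribable, and, for \eqref{eq:10}, a configuration in which the two sides live in disjoint coordinates. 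The nilpotent shift on $\R^N$ (resp.\ $\R^{2N}$) does all of this; what remains is the routine verification of skew-symmetry, of the independence claims, and of the elementary coefficient bookkeeping in $(ii)\Leftrightarrow(iii)$.
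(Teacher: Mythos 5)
Your proposal is correct, and in the one nontrivial direction it takes a genuinely different route from the paper. Both arguments hinge on Proposition~\ref{prop:tens}: the implication from the coefficient identities to tensoriality is, in both cases, just the observation that the hypotheses kill each grouped coefficient in \eqref{eq:9} and \eqref{eq:10} uniformly in $J$ (your remark about moving powers of $J$ across the pairing is really internal to the proof of Proposition~\ref{prop:tens}, but that is harmless), and the equivalence of the coefficient identities with vanishing on $V$ is the same elementary restriction-and-expansion bookkeeping in both treatments. The difference is in how tensoriality is shown to force the identities. The paper proves (i)$\Rightarrow$(iii) directly: for each $(\lambda,\mu)\in\R^2$ it builds a skew endomorphism preserving $TM$ with two local eigenvector fields $X,Y$ of eigenvalues $\lambda,\mu$, feeds these into \eqref{eq:9}--\eqref{eq:10}, and concludes that the three two-variable restrictions $P(v,-u,u)$, $P(-u,v,u)$, $P(-u,u,v)$ vanish at every $(\lambda,\mu)$, hence identically. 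You instead prove (i)$\Rightarrow$(ii) with a single explicit test object per equation: the nilpotent shift $J(X+\alpha)=AX-A^*\alpha$ on $\R^N$ (and its two-block version on $\R^{2N}$), whose powers applied to $\partial_1$ stay linearly independent, so the coefficients $\sum_j(-1)^ja_{i,j,t-j}$ (and, via the block separation of the two sides of \eqref{eq:10} and the sign reindexing $j\mapsto t-j$, the other two families) decouple at once; the details you defer (skew-symmetry of $J$, independence of the $\partial_k$, the $(-1)^t$ bookkeeping) all check out. What the paper's eigenvalue family buys is a very short argument via vanishing of polynomial functions on $\R^2$; what your nilpotent construction buys is that only two fixed, completely explicit manifolds and endomorphisms are needed, and that the linear identities of (ii) are extracted directly rather than through the restricted polynomials. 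Both are complete proofs.
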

\begin{proof} It is clear that (iii) implies (ii) and, by Proposition \ref{prop:tens}, (ii) implies (i). It remains to show that if (i) holds, then  the polynomials $A(u,v):=P(v,-u, u)$, $ B(u,v):=P(-u, v,u)$, and $C(u,v):=P(-u, u, v)$
vanish identically. Let $\lambda,\mu\in \R$. Since the tensoriality of $P$ does not depend on the manifold $M$, we can assume ${\rm dim}(M)\geq 2$ and construct a skew-symmetric endomorphism $J:\T M \rightarrow \T M$ preserving the tangent bundle (i.e.\ $J(TM)\subseteq TM$ such that locally there are linearly independent vector fields $X,Y\in \Gamma(TM)$ such that $J(X)=\lambda X$, and $J(Y)=\mu Y$). 

Under these assumptions, it follows from Proposition \ref{prop:tens} that
$A(\lambda,\mu)X(f)Y=0$ and $B(\mu,\lambda)X(f)Y=C(\lambda,\mu)Y(f)X$
for all $f\in C^\infty(M)$. Therefore,
$A(\lambda,\mu)=B(\mu,\lambda)=C(\lambda,\mu)=0$.
\end{proof}

\begin{example}
Consider the {\it shifted Courant-Nijenhuis polynomial}
\begin{equation}\label{eq:19}
S(x,y,z)=(x+y)(y+z)(z+x)=x^2y+xy^2+x^2z+xz^2+y^2z+yz^2+2xyz\,.
\end{equation}
By inspection, the equations in Theorem \ref{theo:unitens} (ii) hold and thus $S$ is tensorial. It follows that the {\it shifted Courant Nijenhuis torsion} \eqref{eq:2} is tensorial for any skew-symmetric endomorphism $J$ of $\T M$.
\end{example}

Theorem \ref{theo:unitens} implies that the subset $\mathcal I\subseteq \R[x,y,z]$ of all tensorial polynomials coincides with the ideal $I(V)$. With respect to the diagonal scaling, we obtain the decomposition $\mathcal I = \bigoplus_D \mathcal I^D$ into graded components. Moreover, $\mathcal I$ is invariant under cyclic permutations of the variables. For instance,  ${\mathcal I}^0=0$, as the equations in Theorem \ref{theo:unitens} (ii) simply reduce to $a_{000}=0$. When $D=1$, the Theorem \ref{theo:unitens} (ii) equations reduce to
\begin{equation*}
    \{a_{100}=0, a_{010}-a_{001}=0\ \  \mbox{ + c.p. }\}
\end{equation*}
where \lq\lq c.p.\rq\rq refers to equations obtained from the preceding equations by permuting the indices cyclically. Again, this implies ${\mathcal I}^1=0$. Similarly, for $D=2$ we obtain 
\begin{equation*}
\{a_{200}=0, a_{110}-a_{101}=0, a_{020}-a_{011}+a_{002}=0\ \  \mbox{ + c.p. }\}
\end{equation*}
which yields ${\mathcal I}^2=0$. When $D=3$, the equations reduce to 
\begin{equation*}
    \{a_{300}=0, a_{210}-a_{201}=0, a_{120}-a_{111}+a_{102}=0, a_{030}-a_{021}+a_{012}-a_{003}=0 \ \  \mbox{ + c.p. }\}
\end{equation*}
which can be further reduced to 
$$\{a_{300}=0, a_{210}=a_{201}=\frac{1}{2}a_{111} \ \ \mbox{ + c.p. }\}$$
Therefore, ${\mathcal I}^3$ is one-dimensional and is spanned by shifted Courant-Nijenhuis polynomial \eqref{eq:19}.
As we will show, all tensorial polynomials can be expressed in terms of $S(x,y,z)$.

\begin{mydef}
Let $P\in\R[x,y,z]$ and let $J:\T M\rightarrow \T M$ be a skew-symmetric endomorphism. The pair $(P,J)$ is called {\it polynomially tensorial} if the equations in Theorem \ref{theo:unitens} (ii)  hold and $P(J,0,0)=P(0,J,0)=P(0,0,J)$.
\end{mydef}

It is a straightforward to verify that polynomially tensorial pairs are tensorial pairs (using Proposition \ref{prop:tens}). Also, if $P$ is a tensorial polynomial, then $(P,J)$ is a polynomially tensorial pair for every skew-symmetric endomorphism $J$ of $\T M$, as $P$ does not contain monomials in one variable. The class of polynomially tensorial pairs generalizes the construction of the tensor ${\mathcal C}_J$ in \cite{AldiGrandini22} and in Example \ref{ex:minimality}.

\begin{prop}\label{prop:Alternating}
If $(P,J)$ is polynomially tensorial pair, then the tensor $P\bullet_J\tau_C$ is alternating.
\end{prop}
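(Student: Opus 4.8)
\medskip
\noindent\emph{Proof plan.} Write $\tau:=P\bullet_J\tau_C$; by hypothesis (together with Proposition~\ref{prop:tens}) $\tau$ is already $C^\infty(M)$-trilinear. Over $\R$ a trilinear form is alternating precisely when it is skew-symmetric under the transpositions of its first two and of its last two arguments, so the plan is to establish these two skew-symmetries.

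The starting point is that $\tau_C$ is alternating only up to explicit anchor corrections. The two forms of axiom \eqref{eq:Courant} read
\[
\tau_C(\x,\y,\z)+\tau_C(\x,\z,\y)=\pi(\x)\langle\y,\z\rangle,\qquad
\tau_C(\x,\y,\z)+\tau_C(\y,\x,\z)=\pi(\z)\langle\x,\y\rangle,
\]
and a direct computation combining them gives $\tau_C=\operatorname{Alt}(\tau_C)+R$, with $\operatorname{Alt}$ the full antisymmetrization and
\[
R(\x,\y,\z)=\tfrac12\bigl(\pi(\x)\langle\y,\z\rangle-\pi(\y)\langle\x,\z\rangle+\pi(\z)\langle\x,\y\rangle\bigr).
\]
Thus $\tau=P\bullet_J\operatorname{Alt}(\tau_C)+P\bullet_J R$, and I would first show $P\bullet_J R=0$. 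Expanding, $P\bullet_J R$ is a sum of three terms of the form $\pm\tfrac12\sum_{i,j,k}a_{i,j,k}\,\pi(J^{a}\cdot)\langle J^{b}\cdot,J^{c}\cdot\rangle$ with $\{a,b,c\}=\{i,j,k\}$. Using the skew-symmetry of $J$ in the form $\langle J^{b}\u,J^{c}\v\rangle=(-1)^{b}\langle\u,J^{b+c}\v\rangle$ one collapses each inner product, leaving a single summation index present only through the anchor vector field; absorbing that index into $\pi(\cdot)$, the structure constants attached to a fixed power of $J$ assemble exactly into the expressions $\sum_i\bigl(\sum_j(-1)^j a_{i,j,t-j}\bigr)J^i$ and their cyclic variants from Proposition~\ref{prop:tens}, which vanish because $(P,J)$ is polynomially tensorial. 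Hence $\tau=P\bullet_J\operatorname{Alt}(\tau_C)$.

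Next I would exploit that the class of polynomially tensorial pairs is stable under the $S_3$-action on the variables of $P$ and under symmetrization: the three families in Theorem~\ref{theo:unitens}(ii) are permuted among themselves under this action, the condition $P(J,0,0)=P(0,J,0)=P(0,0,J)$ is $S_3$-invariant, and a short check shows that $P^{\mathrm{sym}}$ (and the relevant antisymmetrizations of $P$) again pair polynomially tensorially with $J$. When $P$ is symmetric, $P\bullet_J\operatorname{Alt}(\tau_C)$ is visibly alternating — applying a symmetric polynomial of $J$ slotwise to an alternating form preserves alternation — which already settles the motivating cases $S=(x+y)(y+z)(z+x)$ and $\mathcal C_J$. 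In general one writes $P=P^{\mathrm{sym}}+N$ with $N$ of zero full symmetrization, and the relabeling argument of the previous paragraph shows that $N\bullet_J\operatorname{Alt}(\tau_C)$ has zero full antisymmetrization; what remains is to prove that it vanishes identically.

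I expect this last reduction to be the main obstacle. It should follow from the polynomial tensoriality of $N$ — in particular from $N(J,0,0)=N(0,J,0)=N(0,0,J)$, which has not yet been used — by tracking how the standard- and sign-isotypic components of $N$'s coefficient array interact with the non-alternating remainder $R$ of $\tau_C$; alternatively, one may hope to iterate the identities of the second paragraph over the finite $S_3$-orbit of the antisymmetrizations of $P$ and check that the resulting linear relations among the partial-diagonal values of $\tau$ leave no room for a nonzero solution.
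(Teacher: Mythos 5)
Your first two steps are correct and are essentially a repackaging of the paper's own mechanism: the identity $\tau_C=\operatorname{Alt}(\tau_C)+R$ does follow from \eqref{eq:Courant}, and $P\bullet_J R=0$ follows from the three families of equations in Theorem \ref{theo:unitens}(ii) after collapsing the inner products via skew-symmetry of $J$ (the paper works instead with $\theta(\x,\y,\z)=\pi(\x)\langle\y,\z\rangle$ and deduces $P\bullet_J\theta=0$, then invokes \eqref{eq:Courant} directly). Your symmetric case is also correct and already covers the motivating examples, namely $S$ and the minimality tensor with $Q(x,y,z)=P(x+y+z)$.

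However, the step you defer to the last paragraph is not merely ``the main obstacle'': it is false at the level of generality you would need, so the plan cannot be completed by any further use of $N(J,0,0)=N(0,J,0)=N(0,0,J)$. Take $P=xS=x(x+y)(y+z)(z+x)$. It vanishes on $V$, hence is a tensorial polynomial, and $P(J,0,0)=P(0,J,0)=P(0,0,J)=0$, so $(P,J)$ is a polynomially tensorial pair for \emph{every} skew-symmetric $J$ (this is the paper's own remark after the definition). Now take the Heisenberg nilmanifold with its invariant frame ($[X_1,X_2]=X_3$) and the skew endomorphism $JX_i=\lambda_iX_i$, $J\alpha_i=-\lambda_i\alpha_i$ with $(\lambda_1,\lambda_2,\lambda_3)=(1,2,0)$. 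Then $(P\bullet_J\tau_C)(X_1,X_2,\alpha_3)=\tfrac12P(1,2,0)=3$ while $(P\bullet_J\tau_C)(X_2,X_1,\alpha_3)=-\tfrac12P(2,1,0)=-6$, so $P\bullet_J\tau_C$ is not skew in its first two slots; equivalently, by your own reduction, the contribution of the non-symmetric part of $P$ to $P\bullet_J\operatorname{Alt}(\tau_C)$ does not vanish. So alternation genuinely requires a symmetry hypothesis on $P$: the case you settled in your second paragraph is the correct scope of the statement, and it is also what the paper's two displayed identities implicitly use (commuting the powers of $J$ past the permutation of arguments is legitimate exactly when the relevant coefficients of $P$ are symmetric under the corresponding transposition). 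In short, your suspicion about the final reduction was well founded, but the resolution is to restrict to symmetric $P$ (which suffices for $S$ and $\mathcal C_J$), not to push the general case through.
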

\begin{proof} For all $\x,\y,\z\in \Gamma(\T M)$, let $\theta(\x,\y,\z)=\pi(\x)\langle\y,\z\rangle$ so that 
\begin{equation}
\left(P\bullet_J\theta\right)(\x,\y,\z)=\sum_{i,j,t}(-1)^ja_{i,j,t-j}\pi\left(J^i(\x)\right)\langle\y, J^t(\z)\rangle\,.
\end{equation}
Since $(P,J)$ is polynomially tensorial, then $P\bullet_J\theta=0$. Using this and, (\ref{eq:Courant}), we obtain
\begin{equation}
P\bullet_J\tau_C (\x,\y,\z)-P\bullet_J\tau_C (\x,\z,\y)=P\bullet_J\theta(\x,\y,\z)=0 \end{equation}
and
\begin{equation}P\bullet_J\tau_C (\x,\y,\z)-P\bullet_J\tau_C (\y,\x,\z)=P\bullet_J\theta(\z,\x,\y)=0\,, \end{equation}
which concludes the proof.
\end{proof}

\section{The Main Result}
Let $\mathcal I\subseteq \R[x,y,z]$ denote the ideal of all tensorial polynomials. In the notation of Theorem \ref{theo:unitens}, as a consequence of the Real Nullstellensatz \cite[Theorem A.3]{Powers22}, we obtain
\begin{equation}
{\mathcal I}=I(V)=\sqrt[Re]{I_xI_yI_z}\,,
\end{equation}
where $\sqrt[Re]{I_xI_yI_z}$ is the real radical of $I_xI_yI_z$ and 
$I_x, I_y, I_z$ are the following ideals of 
$\mathbb{R}[x,y,z]:$
\begin{equation}
I_x=\langle y+z\rangle \quad
I_y=\langle  z+x\rangle \quad
I_z=\langle x+y\rangle\,.
\end{equation}

\noindent We will show that $\sqrt[Re]{I_xI_yI_z}=I_xI_yI_z$.

\begin{lem}\label{lem: intersection_realradical}
Let $I_x,I_y$ and $I_z$ be defined as above. Then $I_x\cap I_y\cap I_z$ is a real radical ideal.
\end{lem}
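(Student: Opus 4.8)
The plan is to realize $I_x\cap I_y\cap I_z$ as the vanishing ideal of a subset of $\R^3$, using the elementary fact that such ideals are automatically real radical. Recall that an ideal $I\subseteq\R[x,y,z]$ is real radical precisely when $p_1^2+\cdots+p_k^2\in I$ forces $p_1,\dots,p_k\in I$. If $S\subseteq\R^3$ is \emph{any} subset and $I(S)$ denotes the ideal of polynomials vanishing on $S$, then $I(S)$ is real radical: from $\sum_i p_i^2\in I(S)$ one gets $\sum_i p_i(a)^2=0$ for every $a\in S$, and since this is a sum of squares of real numbers, each $p_i(a)=0$, i.e.\ $p_i\in I(S)$.

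So it suffices to identify $I_x\cap I_y\cap I_z$ with such a vanishing ideal. Let $H_x=\{(a,b,c)\in\R^3: b+c=0\}$, and define $H_y,H_z$ analogously as the real hyperplanes cut out by $c+a$ and $a+b$. The first step is to check that $I_x=I(H_x)$ (and likewise for the other two). The inclusion $\langle y+z\rangle\subseteq I(H_x)$ is immediate. For the reverse, given $p$ vanishing on $H_x$, divide $p$, viewed in $\R[x,y][z]$, by the monic polynomial $z+y$: writing $p=q\cdot(y+z)+r$ with $r=p|_{z=-y}\in\R[x,y]$, the polynomial $r$ vanishes at every point of $\R^2$, hence $r=0$ since $\R$ is infinite, so $p\in\langle y+z\rangle$. (Equivalently, $\R[x,y,z]/\langle y+z\rangle\cong\R[x,y]$ is a domain in which $H_x$ is Zariski dense, so the restriction-of-functions map to functions on $H_x$ is injective.) Then, applying the elementary identity $I(A)\cap I(B)=I(A\cup B)$ twice, $I_x\cap I_y\cap I_z=I(H_x)\cap I(H_y)\cap I(H_z)=I(H_x\cup H_y\cup H_z)$, and by the observation of the previous paragraph this ideal is real radical, which is the claim.

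There is essentially no serious obstacle; the only point that deserves a line of justification is the identification $I_x=I(H_x)$ — that a principal ideal generated by a nonzero linear form equals the ideal of its real zero locus — which holds because the form has degree one and $\R$ is infinite. An alternative route would be to note that each $I_w$ is a real prime ideal (the quotient $\R[x,y]$ has formally real fraction field) and to invoke the fact, a consequence of the Real Nullstellensatz already cited, that a finite intersection of real radical ideals is real radical; but the direct argument above avoids this. Finally, this lemma dovetails with the separate (and equally routine) observation that the pairwise coprime linear forms $y+z$, $z+x$, $x+y$ satisfy $I_xI_yI_z=I_x\cap I_y\cap I_z$ in the UFD $\R[x,y,z]$: together these give $\sqrt[Re]{I_xI_yI_z}=I_xI_yI_z$, since the real radical of $I_xI_yI_z$ depends only on its real zero set $H_x\cup H_y\cup H_z$.
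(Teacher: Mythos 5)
Your proof is correct, but it takes a genuinely different route from the paper. The paper cites a criterion from Powers (an irreducible polynomial with a non-singular real zero generates a real radical ideal) to conclude that each of $I_x,I_y,I_z$ is real radical, and then observes that a finite intersection of real radical ideals is again real radical, via $\sqrt[Re]{I\cap J}\subseteq \sqrt[Re]{I}\cap\sqrt[Re]{J}=I\cap J$. You instead identify the intersection outright as a vanishing ideal: you prove $I_x=I(H_x)$ by division with respect to the monic linear form (the remainder vanishes on all of $\R^2$, hence is zero), use $I(A)\cap I(B)=I(A\cup B)$, and then invoke the elementary fact that $I(S)$ is real radical for \emph{any} $S\subseteq\R^3$ because a sum of squares of real numbers vanishes only if each summand does. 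Your argument is more self-contained (it avoids the cited non-singularity criterion and the characterization of real radicals of principal ideals), at the cost of the explicit computation $I_w=I(H_w)$; the paper's argument is shorter given the quoted theorem and isolates a reusable general fact (intersections of real radical ideals are real radical). It is also worth noting that your identification $I_x\cap I_y\cap I_z=I(H_x\cup H_y\cup H_z)=I(V)$, combined with Lemma \ref{lem: product_intersection}, would yield Theorem \ref{theorem: main} directly, without routing through the Real Nullstellensatz as the paper does.
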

\begin{proof}
Since each polynomial $x+y, y+z$ and $z+x$ is  irreducible and each has a non-singular zero in $\mathbb{R}^3$, $I_x, I_y$ and $I_z$ are real radical ideals by \cite[Theorem A.4]{Powers22}. To prove the result, it suffices to show that the intersection of real radical ideals is again a real radical ideal. On one hand  $I\cap J \subseteq \sqrt[Re]{I\cap J}$. On the other hand $\sqrt[Re]{I\cap J} \subseteq \sqrt[Re]{I}$ and $\sqrt[Re]{I\cap J} \subseteq \sqrt[Re]{J}$, therefore $\sqrt[Re]{I\cap J} \subseteq \sqrt[Re]{I}\cap \sqrt[Re]{J}= I\cap J$, as required. 
\end{proof}

\begin{lem}\label{lem: product_intersection}
The ideal $I_xI_yI_z$ is radical. In particular, $I_xI_yI_z=I_x\cap I_y\cap I_z$. 
\end{lem}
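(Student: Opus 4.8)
The plan is to prove that $I_xI_yI_z$ is radical by exhibiting it as an intersection of radical ideals, and more precisely as $I_x\cap I_y\cap I_z$, which by Lemma \ref{lem: intersection_realradical} is (real) radical. The inclusion $I_xI_yI_z\subseteq I_x\cap I_y\cap I_z$ is automatic. For the reverse inclusion, the key structural fact to exploit is that the three generators $x+y$, $y+z$, $z+x$ are pairwise coprime in the UFD $\R[x,y,z]$: each is irreducible, and no two are associates (they involve different pairs of variables). First I would establish that for pairwise coprime principal ideals in a UFD — equivalently, for ideals generated by pairwise coprime elements — the product equals the intersection. This is a standard lemma: if $f,g$ are coprime then $\langle f\rangle\cap\langle g\rangle=\langle fg\rangle$ (any common multiple of coprime elements is a multiple of their product), and one iterates: $\langle f\rangle\cap\langle g\rangle\cap\langle h\rangle = \langle fg\rangle\cap\langle h\rangle = \langle fgh\rangle$ provided $fg$ and $h$ are coprime, which holds since $h$ is coprime to each of $f,g$ and $\R[x,y,z]$ is a UFD.

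Concretely, the steps are: (1) verify $x+y$, $y+z$, $z+x$ are irreducible in $\R[x,y,z]$ (each is linear, hence irreducible) and pairwise non-associate; (2) deduce $(x+y)(y+z)$ and $z+x$ are coprime, since any irreducible dividing the product divides one of the factors and hence is not a unit multiple of $z+x$; (3) apply the coprime-intersection lemma twice to get $I_xI_yI_z = \langle (x+y)(y+z)(z+x)\rangle = I_x\cap I_y\cap I_z$; (4) invoke Lemma \ref{lem: intersection_realradical} to conclude $I_xI_yI_z$ is real radical, in particular radical.

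Alternatively, and perhaps more cleanly given the tools already in the paper: since $S = (x+y)(y+z)(z+x)$ is squarefree (its three irreducible factors are distinct), the principal ideal $\langle S\rangle$ is radical in the UFD $\R[x,y,z]$ — a principal ideal in a UFD is radical iff its generator is squarefree. This immediately gives that $I_xI_yI_z=\langle S\rangle$ is radical, and then $\langle S\rangle = \sqrt{\langle S\rangle} = I(V)$ by the ordinary Nullstellensatz over $\C$ combined with the fact that $V$ is already the real point set, or directly $\langle S\rangle = I_x\cap I_y\cap I_z$ by primary decomposition of a squarefree principal ideal into its distinct prime factors.

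I do not anticipate a serious obstacle here; the only point requiring a little care is making sure the "coprime implies product equals intersection" step is invoked correctly in a multivariate UFD rather than a PID — but coprimality of the specific linear forms is transparent, so this is routine. The substantive content of the section has already been front-loaded into Lemma \ref{lem: intersection_realradical} (via the Real Nullstellensatz machinery of \cite{Powers22}); this lemma is the short bookkeeping step that upgrades "intersection" to "product" and thereby pins down $\mathcal I = I_xI_yI_z$ exactly, setting up the final identification of $\mathcal I$ as the principal ideal generated by the shifted Courant–Nijenhuis polynomial $S$.
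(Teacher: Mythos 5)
Your proof is correct and follows essentially the same route as the paper: both reduce the problem to showing $I_xI_yI_z = I_x\cap I_y\cap I_z$ via the pairwise coprimality (primality and non-associateness) of the linear forms $x+y$, $y+z$, $z+x$ in the UFD $\R[x,y,z]$, and then obtain radicality from Lemma \ref{lem: intersection_realradical}. Your alternative remark that $\langle S\rangle$ is radical because $S$ is squarefree is a valid shortcut but does not change the substance; the paper simply writes out the coprime-implies-product-equals-intersection step explicitly.
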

\begin{proof}
In general, the intersection of finitely many principal ideals in a unique factorization domain is again a principal ideal (generated by the least common multiple of the irreducible factors). 

More specifically for $I_x$ and $I_y$, since $I_xI_y\subseteq I_x\cap I_y$, it suffices to show the reverse inequality. Let $f\in I_x\cap I_y$. Then $f\in I_x$, so $f=r(y+z)$ for some $r\in \mathbb{R}[x,y,z]$. Similarly, $f\in I_y$, so $f=s(z+x)$ for some $s\in \mathbb{R}[x,y,z]$. Then $r(y+z)=s(z+x)$. Then $z+x$ divides $r(y+z)$. Since $z+x$ is prime (equivalently irreducible in a UFD), $z+x$ must divide $(y+z)$ or $r$. Since it doesn't divide $y+z$, it must divide $r$, so $r=\tilde{r}(z+x)$. Then $f=\tilde{r}(z+x)(y+z)\in I_xI_y$, as required. 

A similar argument shows that $(I_x\cap I_y)\cap I_z = (I_xI_y)I_z$, proving the result.
\end{proof}

\begin{theorem}\label{theorem: main}
Let $\mathcal I\subseteq \R[x,y,z]$ denote the set of all tensorial polynomials. Then
\[\mathcal{I} =I_xI_y I_z.\]

\end{theorem}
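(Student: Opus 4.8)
The plan is to combine the structural results already established. Theorem \ref{theo:unitens} identifies $\mathcal I$ with the vanishing ideal $I(V)$, where $V$ is the union of the three hyperplanes $\{y+z=0\}$, $\{z+x=0\}$, $\{x+y=0\}$. Since the zero locus of a product of ideals is the union of the zero loci of the factors, $V = V(I_xI_yI_z)$, so the Real Nullstellensatz (in the form of \cite[Theorem A.3]{Powers22}) gives $\mathcal I = I(V) = \sqrt[Re]{I_xI_yI_z}$. The remaining task is therefore to show that $I_xI_yI_z$ already equals its own real radical.

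For this I would invoke the two preparatory lemmas. Lemma \ref{lem: product_intersection} shows $I_xI_yI_z = I_x\cap I_y\cap I_z$, and Lemma \ref{lem: intersection_realradical} shows that this intersection is a real radical ideal. Consequently $\sqrt[Re]{I_xI_yI_z} = \sqrt[Re]{I_x\cap I_y\cap I_z} = I_x\cap I_y\cap I_z = I_xI_yI_z$, and combining this with the previous paragraph yields $\mathcal I = I_xI_yI_z$.

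I expect the genuine obstacle to lie entirely in the supporting lemmas rather than in this final assembly: over $\R$ one cannot conclude $I(V) = I_xI_yI_z$ directly from $V = V(I_xI_yI_z)$, since a priori $I(V)$ is only the real radical of $I_xI_yI_z$. The crux is thus threefold --- first, that each principal ideal $\langle y+z\rangle$, $\langle z+x\rangle$, $\langle x+y\rangle$ is real radical, which follows from irreducibility of the linear form together with the existence of a nonsingular real point on its zero set (via \cite[Theorem A.4]{Powers22}); second, that the product of these pairwise coprime principal ideals in the UFD $\R[x,y,z]$ coincides with their intersection; and third, the short lattice inequality $\sqrt[Re]{I\cap J} \subseteq \sqrt[Re]{I}\cap\sqrt[Re]{J}$, which forces an intersection of real radical ideals to stay real radical. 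With these in place the theorem follows at once, and I would present it simply as the conclusion of this chain of identifications, noting in passing that $I_xI_yI_z = \langle (y+z)(z+x)(x+y)\rangle = \langle S\rangle$, so the statement says precisely that every tensorial polynomial is a multiple of the shifted Courant--Nijenhuis polynomial.
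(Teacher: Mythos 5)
Your proposal is correct and follows essentially the same route as the paper's proof: Theorem \ref{theo:unitens} plus the Real Nullstellensatz identify $\mathcal I = I(V) = \sqrt[Re]{I_xI_yI_z}$, and Lemmas \ref{lem: intersection_realradical} and \ref{lem: product_intersection} show that $I_xI_yI_z$ equals its own real radical, yielding $\mathcal I = I_xI_yI_z$. Your closing remark that $I_xI_yI_z=\langle S\rangle$ is exactly the content of the paper's subsequent corollary.
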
\label{thm:14}
\begin{proof}
By Theorem \ref{theo:unitens}, $\mathcal{I}=I(V)\subset \mathbb{R}[x,y,z]$ where $V = Z(I_xI_yI_z)$ (ie. the vanishing set of $I_xI_yI_z$). By the real Nullstellensatz \cite[Theorem A.3]{Powers22}, $I(V) = \sqrt[Re]{I_xI_yI_z}$. On the other hand, Lemmas \ref{lem: intersection_realradical} and \ref{lem: product_intersection} imply

\[\sqrt[Re]{I_xI_yI_z} = I_xI_yI_z\]
which yields the conclusion.
\end{proof}

\begin{cor}
The ideal $\mathcal{I}$ of tensorial polynomials is generated by the shifted Nijenhuis polynomial $S$.
\end{cor}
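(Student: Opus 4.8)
The plan is to derive the corollary directly from Theorem \ref{theorem: main}, since that theorem already identifies the ideal of tensorial polynomials with the product $I_xI_yI_z$. First I would recall that $I_x=\langle y+z\rangle$, $I_y=\langle z+x\rangle$, and $I_z=\langle x+y\rangle$ are each principal. In a polynomial ring (indeed in any commutative ring) the product of principal ideals $\langle a\rangle\langle b\rangle\langle c\rangle$ is the principal ideal $\langle abc\rangle$ generated by the product of the generators; this is immediate from the definition of ideal product, since every element of $\langle a\rangle\langle b\rangle\langle c\rangle$ is a finite sum of terms $r(y+z)\cdot s(z+x)\cdot t(x+y)$, each of which is a multiple of $(y+z)(z+x)(x+y)$, and conversely $(y+z)(z+x)(x+y)$ itself lies in the product.

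Next I would observe that $(y+z)(z+x)(x+y)$ is, up to reordering the factors, exactly the shifted Courant-Nijenhuis polynomial $S(x,y,z)=(x+y)(y+z)(z+x)$ introduced in Equation \eqref{eq:19}. Hence $I_xI_yI_z=\langle S\rangle$. Combining this with Theorem \ref{theorem: main}, which states $\mathcal{I}=I_xI_yI_z$, we conclude $\mathcal{I}=\langle S\rangle$, i.e.\ every tensorial polynomial is a polynomial multiple of $S$, and $S$ itself is tensorial (consistent with the earlier Example computing that the equations in Theorem \ref{theo:unitens} (ii) hold for $S$).

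There is essentially no obstacle here: the only thing to be careful about is the elementary algebra fact that the product of principal ideals is principal with generator the product of generators, which requires no hypothesis beyond commutativity of $\R[x,y,z]$. The corollary is therefore a one-line consequence once Theorem \ref{theorem: main} is in hand, and the substantive work has all been done in establishing that $\sqrt[Re]{I_xI_yI_z}=I_xI_yI_z$ via Lemmas \ref{lem: intersection_realradical} and \ref{lem: product_intersection}. I would keep the proof short, simply noting $I_xI_yI_z=\langle(y+z)(z+x)(x+y)\rangle=\langle S\rangle$ and invoking Theorem \ref{theorem: main}.
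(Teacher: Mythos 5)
Your proposal is correct and matches the paper's own argument: both deduce the corollary immediately from Theorem \ref{theorem: main} by observing that $I_xI_yI_z$ is the principal ideal generated by $(x+y)(y+z)(z+x)=S$. Making the elementary fact about products of principal ideals explicit is a harmless elaboration of the same one-line proof.
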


\begin{proof} Clearly $\langle S\rangle\subseteq {\mathcal I}$. On the other hand, suppose $P\in{\mathcal I}$. Since ${\mathcal I}=I_xI_yI_z$, we obtain $P=Q\cdot (x+y)(y+z)(x+z)$ for some $Q\in \R[x,y,z]$. In other words, $P\in \langle S\rangle$.

\end{proof}

\section{Generalizations}

\subsection{Symmetric endomorphisms} The skew-symmetry assumption on $J$ was used in the proof of Proposition \ref{prop:tens}, though not in an essential way. A similar analysis can be carried out if $J$ is a {\it symmetric endomorphisms} of the generalized tangent bundle i.e.\  one for which $\langle J(\x),\y\rangle = \langle \x, J(\y)\rangle$ for all $\x,\y\in \Gamma (\T M)$. An important class of examples of symmetric endomorphisms is given by \emph{generalized Riemannian metrics} which satisfy the additional condition $J^2={\rm id}$.  The definition of tensorial pairs, polynomially tensorial pairs, and tensorial polynomials can be transferred verbatim to the context of symmetric endomorphisms of the generalized tangent bundle. Here is the analogue of Theorem \ref{theo:unitens}:
\begin{theorem}\label{theo:unitensymm} Let $P=\sum_{i,j,k}a_{i,j,k}x^iy^jz^k$. The following conditions are equivalent:
\begin{enumerate}[(i)]
    \item  $P$ is tensorial (in the context of symmetric endomorphisms);
    \item  the following system of equations is satisfied:
\begin{equation*}
\sum_{j}a_{i,j,t-j}=0,\quad \sum_{j}a_{j,t-j,i}=0, \quad \sum_{j}a_{t-j,i,j}=0
\end{equation*}
for all non-negative integers $i,t$;
\item The polynomial $P$ vanishes on the following subvariety of $\R^{3}$:
$$V:=\left\{(x,y,z):y-z=0\right\}\cup \left\{(x,y,z):z-x=0\right\}\cup\left\{(x,y,z):x-y=0\right\}.$$
\end{enumerate}
\end{theorem}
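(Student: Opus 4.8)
The plan is to mirror the proof of Theorem \ref{theo:unitens} almost line for line, making only the changes forced by replacing the skew-symmetry hypothesis with symmetry. First I would record the analogue of Proposition \ref{prop:tens}: redoing the computation in its proof with $J$ symmetric rather than skew, the only place skew-symmetry was used is in passing from \eqref{eq:15} and \eqref{eq:16} to \eqref{eq:9} and \eqref{eq:10}, where one rewrites $\langle J^j(\y),J^k(\z)\rangle$ by moving powers of $J$ across the inner product. For a symmetric endomorphism one has $\langle J^j(\y),J^k(\z)\rangle=\langle J^{j+k}(\y),\z\rangle=\langle \y,J^{j+k}(\z)\rangle$, with no sign, whereas for the skew case the same move produced $(-1)^j$ (or $(-1)^k$). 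Hence the symmetric analogue of Proposition \ref{prop:tens} is obtained by deleting every factor $(-1)^j$ from \eqref{eq:9} and \eqref{eq:10}. This immediately gives that (iii) $\Rightarrow$ (ii) $\Rightarrow$ (i) exactly as before: (iii) $\Rightarrow$ (ii) because vanishing on the three hyperplanes $x=y$, $y=z$, $z=x$ forces, upon substituting $y=z$ (or cyclic), the coefficient sums in (ii) to vanish; and (ii) $\Rightarrow$ (i) by the symmetric version of Proposition \ref{prop:tens}.

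The substantive direction is again (i) $\Rightarrow$ (iii). I would argue that if $P$ is tensorial (for symmetric $J$), then the polynomials $A(u,v):=P(v,u,u)$, $B(u,v):=P(u,v,u)$, and $C(u,v):=P(u,u,v)$ vanish identically. As in the proof of Theorem \ref{theo:unitens}, fix $\lambda,\mu\in\R$, take $\dim M\ge 2$, and build a \emph{symmetric} endomorphism $J$ of $\T M$ that preserves $TM$ and satisfies $J(X)=\lambda X$, $J(Y)=\mu Y$ for local linearly independent vector fields $X,Y$ — for instance, by choosing any symmetric endomorphism of $TM$ with these eigenvectors and eigenvalues and extending it to $\T M$ by the adjoint on $T^*M$ so that the result is symmetric with respect to the tautological pairing. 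Feeding this $J$ into the symmetric analogue of Proposition \ref{prop:tens}, and evaluating the powers $J^i(X)=\lambda^iX$, $J^i(Y)=\mu^iY$, the two displayed identities collapse (with all signs now $+1$) to $A(\lambda,\mu)X(f)Y=0$ and $B(\mu,\lambda)X(f)Y=C(\lambda,\mu)Y(f)X$ for every $f\in C^\infty(M)$. Since $X,Y$ are linearly independent and one can choose $f$ with $X(f)\ne 0$ and $f'$ with $Y(f')\ne 0$, this forces $A(\lambda,\mu)=B(\mu,\lambda)=C(\lambda,\mu)=0$ for all $\lambda,\mu$, i.e.\ $A=B=C\equiv 0$. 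That $P$ vanishes on $V$ is then exactly the statement $A=B=C=0$: a point with $y=z$ is $(v,u,u)$ and $P$ there equals $A(u,v)$, and cyclically for the other two components.

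The one genuine thing to check — and the step I expect to be the only mild obstacle — is the existence of the required symmetric $J$ on $\T M$: I need a symmetric (with respect to $\langle\cdot,\cdot\rangle$) bundle endomorphism preserving $TM$ with two prescribed eigenvector/eigenvalue pairs on the tangent part. Here one must be a little careful because the tautological pairing is entirely off-diagonal between $TM$ and $T^*M$, so an endomorphism of the form $A\oplus A^*$ with $A\in\mathrm{End}(TM)$ is symmetric precisely when $\langle A X+A^*\alpha, Y+\beta\rangle = \langle X+\alpha, AY+A^*\beta\rangle$, which holds with $A^*$ the transpose of $A$; then $A^*$ automatically has eigenvalues $\lambda,\mu$ as well (on the dual coframe), and the block preserves $TM$, as needed. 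With this in hand the argument is complete, and one notes in passing that the resulting variety $V$ (the three hyperplanes $x=y,\ y=z,\ z=x$) has defining ideal $I(V) = \langle x-y\rangle\langle y-z\rangle\langle z-x\rangle$ by the identical Real Nullstellensatz argument of Lemmas \ref{lem: intersection_realradical} and \ref{lem: product_intersection}, so the symmetric analogue of Theorem \ref{theorem: main} holds verbatim with the ``shifted'' polynomial $(x-y)(y-z)(z-x)$ in place of $S$.
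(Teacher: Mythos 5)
Your proposal is correct and is essentially the argument the paper intends: the paper omits the proof precisely because it is the sign-free mirror of Theorem \ref{theo:unitens}, which you reproduce faithfully, including the one point that actually needs checking, namely that $J=A\oplus A^{*}$ (with $A^{*}$ the transpose on $T^{*}M$) is symmetric for the tautological pairing and realizes the prescribed eigenvalues $\lambda,\mu$ on $TM$. No gaps; your concluding remark about $I(V)=\langle (x-y)(y-z)(z-x)\rangle$ likewise matches the paper's stated symmetric analogue of Theorem \ref{theorem: main}.
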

Similar calculations show that the ideal $\mathcal{I}'$ of tensorial polynomials (in the symmetric setting) is generated by 
\begin{equation*}
S'(x,y,z)=(x-y)(y-z)(x-z)
 \end{equation*}
Note that when $J$ is a generalized Riemannian structure, we get $S'\bullet_J\tau_C=0$. In other words, this construction produces no nontrivial analogue of the (shifted) Courant-Nijenhuis tensor for generalized Riemannian structures.

\subsection{More general algebroids}
It is worthwhile to note that the computations in the paper do not make use of the explicit expressions of the tautological inner product or the Courant-Dorfman bracket. In fact, all results in the paper can be generalized verbatim by replacing $\T M$ with a vector bundle $A$ with the following data:

\vspace{1mm}

\begin{enumerate}
\item a vector bundle map $\pi:A\rightarrow TM$;
\item a symmetric, nondegenerate inner product $\langle\cdot  ,\cdot \rangle$ on $A$;
\item an $\R$-bilinear bracket $\lbra\cdot ,\cdot \rbra$ on $A$; 
\item $\pi$, $\langle\cdot  ,\cdot \rangle$, and $\lbra\cdot ,\cdot \rbra$
satisfy the identity in Equation  (\ref{eq:Courant}). 
\end{enumerate}

\vspace{1mm}

\noindent We propose the terminology \emph{Proto-Courant algebroid} for a vector bundle $A$ with the structure defined in (1) to (4) above, generalizing the notion of \emph{Pre-Courant algebroid} introduced in \cite{Vaisman2004TransitiveCA}. Furthermore, with the exception of Proposition \ref{prop:Alternating}, our results still hold true if $\pi$, $\langle\cdot ,\cdot \rangle$, and $\lbra\cdot ,\cdot \rbra$ satisfy only the Leibniz identities (\ref{eq:Leibniz}). These types of algebroids are known as \emph{local almost-Leibniz algebroids} \cite{PreLeibniz}.

\section*{Acknowledgements} This work is supported in part by VCU Quest Award ``Quantum Fields and Knots: An Integrative Approach.''

\begin{bibdiv} 
\begin{biblist}

\bib{AldiGrandini22}{article}{
   author={Aldi, Marco},
   author={Grandini, Daniele},
   title={Polynomial structures in generalized geometry},
   journal={Differential Geom. Appl.},
   volume={84},
   date={2022},
   pages={Paper No. 101925, 38},
}

\bib{Courant90}{article}{
   author={Courant, Theodore James},
   title={Dirac manifolds},
   journal={Trans. Amer. Math. Soc.},
   volume={319},
   date={1990},
   number={2},
   pages={631--661}
}


\bib{PreLeibniz}{article}{
author = {Tekin Dereli and Keremcan Dogan},
title = {‘Anti-commutable’ local pre-Leibniz algebroids and admissible connections},
journal = {Journal of Geometry and Physics},
volume = {186},
year = {2023},
}

\bib{GoldbergPetridis73}{article}{
   author={Goldberg, Samuel I.},
   author={Petridis, Nicholas C.},
   title={Differentiable solutions of algebraic equations on manifolds},
   journal={Kodai Math. Sem. Rep.},
   volume={25},
   date={1973},
   pages={111--128}
}

\bib{GoldbergYano70}{article}{
   author={Goldberg, Samuel I.},
   author={Yano, Kentaro},
   title={Polynomial structures on manifolds},
   journal={Kodai Math. Sem. Rep.},
   volume={22},
   date={1970},
   pages={199--218}
}

\bib{Gualtieri11}{article}{
author={Gualtieri, Marco},
title={Generalized complex geometry},
journal={Ann. of Math. (2)},
volume={174},
date={2011},
number={1},
pages={75--123}
}

\bib{Kosmann-Schwarzbach04}{article}{
   author={Kosmann-Schwarzbach, Yvette},
   title={Derived brackets},
   journal={Lett. Math. Phys.},
   volume={69},
   date={2004},
   pages={61--87},
}

\bib{Kosmann-Schwarzbach19}{article}{
   author={Kosmann-Schwarzbach, Yvette},
   title={Beyond recursion operators},
   conference={
      title={Geometric methods in physics XXXVI},
   },
   book={
      series={Trends Math.},
      publisher={Birkh\"{a}user/Springer, Cham},
   },
   date={2019},
   pages={167--180}
}

\bib{PoonWade11}{article}{
   author={Poon, Yat Sun},
   author={Wade, A{\"{\i}}ssa},
   title={Generalized contact structures},
   journal={J. Lond. Math. Soc. (2)},
   volume={83},
   date={2011},
   number={2},
   pages={333--352}
}

\bib{Powers22}{book}{
   author={Powers, Victoria},
   title={Certificates of Positivity for Real Polynomials},
   series={Developments in Mathematics},
   volume={1},
   publisher={Springer Cham},
   date={2022},
}

\bib{Vaisman08}{article}{
   author={Vaisman, Izu},
   title={Generalized CRF-structures},
   journal={Geom. Dedicata},
   volume={133},
   date={2008},
   pages={129--154}
}

\bib{Vaisman2004TransitiveCA}{article}{
  title={Transitive Courant algebroids},
  author={Vaisman, Izu},
  journal={Int. J. Math. Math. Sci.},
  year={2004},
  volume={2005},
  pages={1737--1758},
  url={https://api.semanticscholar.org/CorpusID:623827}
}

\end{biblist}
\end{bibdiv}

\end{document}